\newtheorem{defn}{Definition}[section]
\newtheorem{lem}[defn]{Lemma}
\newtheorem{ex}[defn]{Example}
\newtheorem{rem1}[defn]{Remark}
\newtheorem{thm}[defn]{Theorem}
\newtheorem{cor}[defn]{Corollary}
\newcommand{\N}{\mathbb N}
\newcommand{\R}{\mathbb R}
\newcommand{\di}{\displaystyle}
\newcommand{\FF}{\mathcal{F}}
\newcommand{\diam}{{\rm diam}}
\newcommand{\F}{$\mathcal{F}$}
\newcommand{\x}{$\boldmath x$}
\begin{document}

\title[Countable Contraction Maps]{Countable contraction maps in metric spaces: Invariant Sets and Measures}

%\markboth{Barrozo et.~al.}{Countable Contraction maps}

\subjclass[2000]{Primary 28A80; Secondary 37C25; 37C70}

\keywords{Countable iterated function system, Invariant set, Invariant measure}

\author{Mar\'{\i}a Fernanda Barrozo}
\address{Departamento de
Matem\'atica \\ Facultad de Ciencias F\'isico-Matemáticas y Naturales\\
Universidad Nacional de San Luis\\ and IMASL, CONICET, Argentina}
\email[Mar\'{\i}a Fernanda Barrozo]{mfbarroz@unsl.edu.ar}

\author{Ursula Molter}
\address{Departamento de
Matem\'atica \\ Facultad de Ciencias Exactas y Naturales\\ Universidad
de Buenos Aires\\ Ciudad Universitaria, Pabell\'on I\\ 1428 Capital
Federal\\ ARGENTINA\\ and IMAS, CONICET, Argentina}
\email[Ursula~M.~Molter]{umolter@dm.uba.ar}
\thanks{The authors acknowledge
support from the following grants: PICT 2011-0436 (ANPCyT), PIP
01070 and 2008-398 (CONICET), UBACyT 20020100100502 (UBA) and PROICO 3-0412
(UNSL) }

\date{\today}

\begin{abstract}
We consider a complete metric space $(X,d)$ and a countable number
of contractive mappings on $X$, $\FF=\{F_i:i\in\N\}$. We show the
existence of a {\em smallest} invariant set (with respect to
inclusion) for $\FF$. If the maps $F_i$ are of the form $F_i(\x) = r_i \x + b_i$ on $X=\R^d$, we can prove a converse of the classic result on contraction maps. Precisely, we can show that for that case, there exists a {\em unique} bounded invariant set if and only if $r = \sup_i r_i$ is strictly smaller than $1$.

 Further, if $\rho = \{\rho_k\}_{k\in \N}$ is a
probability sequence, we show that if there exists an invariant
measure for the system $(\FF,\rho)$, then it's support must be
precisely this smallest invariant set. If in addition there exists any {\em bounded} invariant set, this
invariant measure is unique - even though there may be more than one invariant set.
\end{abstract}

\maketitle

\section{Introduction}

A map $F$ from a metric space $(X,d)$ into itself is a
\textbf{contraction}, if there exists a constant $c$, $0 < c <1$, such that $d(F(x),
F(y)) \leq c d(x,y)$, for all $x, y \in X$. We denote by
$\text{Lip}(F)$ the smallest of all such constants and we call it
\emph{Lipschitz constant} or \emph{contraction factor} of $F$.

In \cite{Hut81}, Hutchinson introduced the notion of {\em invariant
set} and {\em invariant measure} for a finite set of contraction
mappings from a complete metric space $(X,d)$ into itself. In
particular in that paper he proves the now {\em classical} results:

\begin{thm} \label{h1} \cite{Hut81} Let $\mathcal{F}=\{F_1,\dots,F_N\}$ be a finite family of contraction maps in the complete metric space $(X,d)$. Let
$\mathcal{C}(X)$ be the set of non-empty closed and bounded subsets
of $X$, and let $\rho_1,\dots,\rho_N \in (0,1)$ and
$\sum_{i=1}^N\rho_i=1$. Then we have:\begin{itemize}
\item There exists a {\em unique} set $K\in \mathcal{C}(X)$ that is invariant with respect to $\FF$, i.e.
$$K=\overline{\bigcup_{i=1}^NF_i(K)}.$$
$K$ is in fact compact and is the closure of the set of fixed points of all finite compositions of elements of $\mathcal{F}$.
\item There exists a unique Borel regular (outer) measure $\mu$ with bounded support, and total mass $1$, that is invariant with respect to $(\FF,\rho)$, i.e.
$$\mu=\sum_{i=1}^N\rho_i {F_i}_\sharp\mu,$$
where ${F_i}_\sharp\mu$ is the measure defined by ${F_i}_\sharp\mu(E)=\mu(F_i^{-1}(E))$ for each
$E\subset X$.
\item The support of $\mu$, $\text{supp}~\mu$ is precisely the invariant set $K$.
\end{itemize}
\end{thm}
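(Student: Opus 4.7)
The plan is to reduce both existence and uniqueness statements to Banach's contraction principle applied on appropriate complete metric spaces. For the invariant set, I would endow $\mathcal{C}(X)$ with the Hausdorff metric $d_H(A,B)=\max\{\sup_{a\in A}d(a,B),\,\sup_{b\in B}d(b,A)\}$ and recall that $(\mathcal{C}(X),d_H)$ is complete whenever $(X,d)$ is. I would then define the Hutchinson operator $\mathcal{H}\colon\mathcal{C}(X)\to\mathcal{C}(X)$ by $\mathcal{H}(A)=\bigcup_{i=1}^N F_i(A)$, which is well defined since a finite union of closed bounded sets is closed and bounded and each $F_i$ preserves boundedness. The two elementary estimates $d_H(\bigcup_i A_i,\bigcup_i B_i)\le \max_i d_H(A_i,B_i)$ and $d_H(F_i(A),F_i(B))\le \lip(F_i)\,d_H(A,B)$ combine to show that $\mathcal{H}$ is a contraction on $\mathcal{C}(X)$ with factor $c=\max_i \lip(F_i)<1$, and Banach's fixed point theorem produces a unique invariant set $K$.

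To obtain compactness of $K$ and its description via fixed points of compositions, I would iterate $\mathcal{H}$ starting from a singleton $\{x_0\}$. Each iterate $\mathcal{H}^n(\{x_0\})$ is a finite, hence compact, set, and $\mathcal{H}^n(\{x_0\})\to K$ in $d_H$, so $K$ is totally bounded and therefore compact. A direct unfolding shows $\mathcal{H}^n(\{x_0\})=\{F_{i_1}\circ\cdots\circ F_{i_n}(x_0):i_j\in\{1,\dots,N\}\}$, and every $x\in K$ can be written as $F_{i_1}\circ\cdots\circ F_{i_n}(y)$ with $y\in K$; such a point differs from the fixed point of the same composition by at most $c^n\diam(K)$, which gives density of the fixed points of finite compositions in $K$.

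For the measure statement I would work in the space of Borel probability measures with support contained in a bounded closed set $B\supseteq K$ (such a $B$ exists as any sufficiently large closed ball is absorbed by $\mathcal{H}$ after enough iterations), equipped with the Monge--Kantorovich (Wasserstein-$1$) metric $d_{MK}(\mu,\nu)=\sup\{|\int f\,d\mu-\int f\,d\nu|:\lip(f)\le 1\}$. On this subspace $d_{MK}$ is complete, and the Markov operator $M(\mu)=\sum_i \rho_i (F_i)_\sharp\mu$ maps it into itself. Since $f\circ F_i$ is $\lip(F_i)$-Lipschitz whenever $f$ is $1$-Lipschitz, a change of variables gives $d_{MK}(M\mu,M\nu)\le c\,d_{MK}(\mu,\nu)$, so Banach's theorem yields a unique fixed $\mu$. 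The identity $\mathrm{supp}(\mu)=\overline{\bigcup_i F_i(\mathrm{supp}(\mu))}$, a consequence of invariance and continuity of the $F_i$, combined with boundedness forces $\mathrm{supp}(\mu)\in\mathcal{C}(X)$ to equal $K$ by the uniqueness part already proved.

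The main obstacle, in my view, is choosing the correct ambient space for the measure contraction: $d_{MK}$ is not complete on the full space of probability measures, so one has to identify a bounded invariant reservoir for the mass before invoking Banach's theorem, and one must verify that $M$ actually maps this reservoir into itself. Once that setup is arranged, the whole measure argument mirrors the hyperspace case almost line by line.
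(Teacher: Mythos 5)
The paper does not prove this statement at all: Theorem~\ref{h1} is quoted from Hutchinson \cite{Hut81} as background, so there is no internal proof to compare against. Your outline is exactly the classical contraction-mapping argument of that cited source (and of Barnsley--Demko): the Hutchinson operator on the hyperspace with the Hausdorff metric, and the Markov operator on a space of probability measures with the Monge--Kantorovich metric, so in that sense you are reproducing the standard route correctly.

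Two technical points deserve repair, both of which you partly gloss over. First, in a general complete metric space a closed bounded set need not be compact, so $F_i(A)$ need not be closed; your claim that $\mathcal{H}(A)=\bigcup_{i=1}^N F_i(A)$ is well defined on $\mathcal{C}(X)$ ``since a finite union of closed bounded sets is closed'' is not justified as stated. The operator must be $A\mapsto\overline{\bigcup_{i=1}^N F_i(A)}$ (as in the statement of the theorem itself); this costs nothing in the Hausdorff-metric estimates, and once compactness of $K$ is established the closure becomes redundant because finite unions of compact images are compact. Also, for the density claim you implicitly use that the fixed point $x_{i_1\dots i_n}$ of $F_{i_1\dots i_n}$ lies in $K$ (so that $d(y,x_{i_1\dots i_n})\leq\diam K$); that is true, by the standard argument that $F_{i_1\dots i_n}(K)\subset K$ and $K$ is closed, but it should be said. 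Second, the completeness of the Monge--Kantorovich metric on your ``reservoir'' of measures is the genuinely delicate step: for possibly non-separable complete spaces this is precisely the issue addressed by Kravchenko's paper cited here, and your proposal asserts completeness rather than proving it, although you do correctly identify it as the main obstacle. With the invariant closed ball $B$ (center $x_0$, radius $R\geq\max_i d(F_i(x_0),x_0)/(1-c)$) and either an appeal to such a completeness result or a restriction to tight (equivalently, separable) measures, your argument goes through and matches the cited proof.
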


Similar results can be found in
\cite{Bar85}. For general references see \cite{Fal85}, \cite{Fal90}
and \cite{Mat95}.

If instead of a finite number of contraction maps one considers a
countable collection, the notions of invariant set and invariant
measure can be extended in a natural way:

\begin{defn} \label{infinite-invariant} Let $\mathcal{F}=\{F_i\}_{i=1}^\infty$ be a countable family of contractions in the complete metric space $(X,d)$.
We say that a non-empty set $E\subset X$ is an \textbf{invariant
set} for $\FF$ if
$$E=\overline{\bigcup_{i=1}^\infty F_i(E)}.$$ If
$\rho=(\rho_1,\rho_2,\dots)$ is a probability secuence, i.e.
$\rho_i\in(0,1)$ and $\sum_{i=1}^\infty\rho_i=1$, we say that an
outer measure $\mu$ is an \textbf{invariant measure} for
$(\FF,\rho)$ if
$$\mu=\sum_{i=1}^\infty\rho_i {F_i}_\sharp\mu,$$
where ${F_i}_\sharp\mu$ is (as before) the measure defined by
${F_i}_\sharp\mu(E)=\mu(F_i^{-1}(E))$ for each $E\subset X$.
\end{defn}

Finite families of contractive mappings automatically satisfy two
conditions which allow to ensure the existence and uniqueness of a
bounded invariant set: on one hand the boundedness of the set of
fixed points, and on the other hand the fact that the maximum of
Lipschitz constants is {\em strictly} less than $1$. In general, if
one has a countable system these
conditions are not automatically satisfied.

\begin{defn}\label{def:rD} A set
$\FF := \{F_i\}_{i\in I}$, for $I \subset \N$ either finite or infinite, in a complete metric space $(X,d)$, where $F_i$ are contraction maps, will be called {\em Iterated Function System} (IFS). We will denote by $r$ the supremum of  the contraction factors, and by $D$ the set of fixed points, i.e.
 \begin{align*}
 r &:= \sup_{i\in I} \{ r_i : r_i \ \text{contraction factor of} \ F_i\} \quad \text{and}\\
 D &:= \{x_i, i\in I : F_i(x_i) = x_i \}.
 \end{align*}
 \end{defn}

In the case that $D$ is bounded and
$r<1$, Bandt \cite{Ban89} shows the existence and uniqueness of a
\textbf{bounded invariant set} for $\FF$, where
$\FF=\{F_i\}_{i=1}^\infty$ is a countable family of contractive
mappings. Countable iterated function systems were first introduced
by Mauldin and Williams (\cite{MW86}, see also \cite{Mau95} and
\cite{MU96b}).

In the present article we show that for \textbf{any} countable family of
contractive mappings $\FF$ there exists an invariant set (see also
\cite{Sec12}). In fact, there exists a smallest invariant set, with
respect to inclusion, for $\FF$. We show that this set is the
closure of the set of fixed points of finite compositions of members
of $\FF$. For this result we do not need to assume that $D$ is
bounded neither that $r<1$.

It follows that the bounded invariant set obtained
by Bandt is - as in the finite case - the closure of the set of
fixed points of finite compositions of members of $\FF$.

We further show that the boundedness of $D$ is necessary for the
existence of a \textbf{bounded} invariant set. In fact, since any
invariant set contains $D$, if there exists a bounded invariant set
then $D$ is bounded. However we will show that the condition $r<1$ is not necessary for
the existence of a bounded invariant set.

Further, the condition
that $D$ be bounded is not sufficient: we exhibit an example in
which the set of fixed points is bounded, but there does not exist a
bounded invariant set. However, if $X = \R$ and the system only contains
non-increasing functions, the boundedness of $D$ does suffice (c.f.
Theorem \ref{no-decreciente}).

In addition, we prove a kind of converse to the Theorem by Bandt in
\cite{Ban89}: under certain restrictions, if there exists a unique
bounded invariant set, then necessarily $r = \sup_i r_i <1$ (cf.
Theorem~\ref{lineales en R}).

Finally, we prove that the support
of {\bf any} invariant measure for the countable IFS $(\FF,\rho)$,
where $\rho$ is a probability sequence, must coincide precisely with
the smallest invariant set that we showed to exist. We further show, that if there exists a bounded invariant set, then the invariant measure exists and is \textbf{unique}, even though the invariant set might not be unique.

\section {Invariant Sets}

Let $\mathcal{F}=\{F_i:i\in\N\}$ be a countable family of
contractive mappings in the complete metric space $(X,d)$. First, we
will prove that there exists a smallest invariant set for $\FF$,
with respect to inclusion: the closure of the set of fixed points of
finite compositions of members of $\FF$.

As before,  $x_i$ will denote the fixed point of $F_i$
and $r_i$ will be the Lipschitz constant of $F_i$, i.e.,
  $r_i:=Lip(F_i)$.  $F_{i_1\dots i_k}$ will denote the composition $F_{i_1}\circ\dots\circ
  F_{i_k}$ and $x_{i_1\dots i_k}$ will be the fixed point of $F_{i_1\dots i_k}$.
  Further, $P$ will be the set of fixed points of finite compositions of members of $\FF$.

We first need the following result, which is analogous to the finite IFS case:

\begin{lem}\label{b6} Let $\mathcal{F}=\{F_i:i\in\N\}$ be a countable family of
contractive mappings in the complete metric space $(X,d)$ and let
$P$ be the set of fixed points of finite compositions of members of
$\FF$. If $A\subset X$ is a non-empty closed set such that
$F_i(A)\subset A$ for all $i\in\N$, then $P\subset A$.
\end{lem}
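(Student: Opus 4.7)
The plan is to show that every fixed point of a finite composition lies in $A$ by iterating the composition on an arbitrary point of $A$ and appealing to Banach's fixed point theorem.

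First, I would observe that since each $F_i$ is a contraction with factor $r_i<1$, any finite composition $F_{i_1\dots i_k}=F_{i_1}\circ\cdots\circ F_{i_k}$ is itself a contraction on $(X,d)$, with Lipschitz constant at most $r_{i_1}r_{i_2}\cdots r_{i_k}<1$. In particular, on the complete metric space $(X,d)$ the map $F_{i_1\dots i_k}$ has a unique fixed point, namely $x_{i_1\dots i_k}$, and for any starting point $a\in X$ the iterates $F_{i_1\dots i_k}^{\,n}(a)$ converge to $x_{i_1\dots i_k}$ as $n\to\infty$.

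Next, I would exploit the hypothesis $F_i(A)\subset A$. By a trivial induction this lifts to every finite composition: $F_{i_1\dots i_k}(A)\subset A$. Fix any element $a_0\in A$ (which exists as $A$ is non-empty) and any finite word $i_1,\dots,i_k$. Then the entire orbit $\{F_{i_1\dots i_k}^{\,n}(a_0)\}_{n\ge 1}$ lies in $A$. By the previous paragraph this sequence converges to $x_{i_1\dots i_k}$, and since $A$ is closed the limit stays in $A$. Therefore $x_{i_1\dots i_k}\in A$.

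Since the word $i_1,\dots,i_k$ was arbitrary, every fixed point of every finite composition of elements of $\FF$ belongs to $A$, i.e.\ $P\subset A$, as required. There is no real obstacle here: the countability of $\FF$ plays no role because we only iterate a single (already chosen) finite composition at a time, so the argument is identical to the finite IFS setting; the only things being used are the contraction property of a single composition, the completeness of $X$, and the closedness of $A$.
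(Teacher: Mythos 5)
Your proof is correct and follows essentially the same argument as the paper: iterate the chosen finite composition (itself a contraction) on a point of $A$, use $F_{i_1\dots i_k}(A)\subset A$ to keep the orbit in $A$, and conclude by closedness of $A$ that the fixed point lies in $A$. No issues.
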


\begin{proof}\

Let $x_{i_1\dots i_p}$ be the fixed point of $F_{i_1\dots i_p}$, and
let $a\in A$. 

$$\text{Then}\quad \lim_{k\mapsto\infty} F_{i_1\dots i_p}^k(a) = x_{i_1\dots i_p}.
$$

$$\text{Since} \quad F_i(A)\subset A\quad \text{for all}\quad i\in\N, \quad F_{i_1\dots i_p}(a)\in A,$$
we have $F_{i_1\dots i_p}^k(a)\in A$ for all $k$. 

Since $A$
is closed, $x_{i_1\dots i_p}\in A$. Hence, $P\subset A$.
\end{proof}

\begin{thm} \label{b7} Let $\mathcal{F}=\{F_i:i\in\N\}$ be a countable family of
contractive mappings in the complete metric space $(X,d)$. If $P$ is
the set of fixed points of finite compositions of members of $\FF$,
then $\overline{P}$ is the smallest invariant set for $\FF$, with
respect to inclusion.
\end{thm}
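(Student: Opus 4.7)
The plan is to prove two things: first, that $\overline{P}$ is invariant in the sense that $\overline{P} = \overline{\bigcup_{i\in\N} F_i(\overline{P})}$; second, that $\overline{P}$ is contained in every invariant set, which makes it the smallest. The minimality half is immediate from Lemma \ref{b6}: any invariant set $E$ is closed (being written as a closure) and satisfies $F_i(E)\subset E$ for every $i\in\N$, so $P\subset E$ and therefore $\overline{P}\subset E$.

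For the invariance, I would establish the two inclusions separately. The containment $\overline{P}\subset\overline{\bigcup_i F_i(\overline{P})}$ is the easier direction: given a fixed point $y = x_{i_1\dots i_p}$ of $F_{i_1\dots i_p}$, I claim that $z := F_{i_2\dots i_p}(y)$ is itself the fixed point of the cyclic shift $F_{i_2\dots i_p i_1}$, since
\[
F_{i_2\dots i_p i_1}(z) = F_{i_2\dots i_p}(F_{i_1}(F_{i_2\dots i_p}(y))) = F_{i_2\dots i_p}(F_{i_1\dots i_p}(y)) = F_{i_2\dots i_p}(y) = z.
\]
Thus $z\in P$ and $y = F_{i_1}(z)\in F_{i_1}(P)$. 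Hence $P\subset\bigcup_i F_i(P)\subset\bigcup_i F_i(\overline{P})$, and taking closures yields the inclusion.

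The reverse inclusion $\overline{\bigcup_i F_i(\overline{P})}\subset\overline{P}$ reduces, by continuity of each $F_i$, to showing $F_i(P)\subset\overline{P}$. This is the step I expect to be the main obstacle, since $F_i(x_{j_1\dots j_p})$ is not in general the fixed point of any finite composition in $\FF$. My plan is to approximate it by fixed points of longer and longer compositions: for $x = x_{j_1\dots j_p}$ and $k\geq 1$, consider the map $F_i\circ F_{j_1\dots j_p}^k$, whose Lipschitz constant is at most $r_i c^k$, where $c = r_{j_1}\cdots r_{j_p}<1$. Let $w_k\in P$ denote its unique fixed point. Since $F_{j_1\dots j_p}^k(x) = x$, this map sends $x$ to $F_i(x)$, so the contraction estimate combined with the triangle inequality gives
\[
d(w_k, F_i(x)) \leq \frac{r_i c^k}{1 - r_i c^k}\, d(F_i(x), x),
\]
which tends to $0$ as $k\to\infty$. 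Hence $F_i(x)\in\overline{P}$, completing the harder inclusion and, together with the previous paragraph, the proof.
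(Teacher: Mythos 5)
Your proof is correct, and while its skeleton matches the paper's (prove invariance of $\overline{P}$, then get minimality from Lemma \ref{b6}), both halves of the invariance argument are handled by genuinely different means. For $F_i(P)\subset\overline{P}$ you use the same approximants as the paper -- the fixed points $w_k$ of $F_i\circ F_{j_1\dots j_p}^k$ -- but you control $d(w_k,F_i(x))$ by the standard displacement estimate $\frac{r_ic^k}{1-r_ic^k}\,d(F_i(x),x)$, whereas the paper bounds it by $r_ic^k\,\diam P_N$ and must invoke Hutchinson's Theorem \ref{h1} for the finite subfamily $\{F_1,\dots,F_N\}$ just to know that $\diam P_N<\infty$; your estimate is self-contained and removes that dependence. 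For the inclusion $\overline{P}\subset\overline{\bigcup_i F_i(\overline{P})}$ your cyclic-shift identity is a real simplification: you observe that $F_{i_2\dots i_p}(x_{i_1\dots i_p})$ is \emph{exactly} the fixed point $x_{i_2\dots i_p i_1}$, so $P\subset\bigcup_i F_i(P)$ holds on the nose with no limiting argument, while the paper only reaches the same point $F_{\alpha_2\dots\alpha_n}(x_{\alpha_1\dots\alpha_n})$ as a limit of fixed points of $F_{\alpha_2\dots\alpha_n}\circ F_{\alpha_1\dots\alpha_n}^k$, again using $\diam P_N$. What your route buys is a cleaner and more elementary proof (no appeal to the finite-IFS theorem anywhere) and the slightly stronger intermediate fact $P\subset\bigcup_i F_i(P)$; the only cosmetic point to add is that for $p=1$ the ``cyclic shift'' degenerates, where the claim is just the trivial identity $x_{i_1}=F_{i_1}(x_{i_1})$, so one should read the empty composition as the identity map.
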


\begin{proof}

First, we will prove that $\overline{P}$ is an invariant set for
$\FF$.

Note that for a fixed $N\in\N$, if we consider the finite sub-family
$\{F_1,\dots,F_N\}$, and let $P_N$ be the set of fixed points of
finite compositions of $F_i$ with $1\leq i\leq N$, from
Theorem \ref{h1} it follows that $\overline{P_N}$ is the unique
compact invariant set for the IFS $\{F_1,\dots,F_N\}$.

In order to prove the inclusion $\overline{\bigcup
F_i(\overline{P})}\subset\overline{P}$ it is enough to show that
$F_i(P)\subset \overline{P}$ for all $i$. For this, let $i\in\N$ be fixed, and let $x_{\alpha_1\dots\alpha_n}\in P$ be the fixed point of
$F_{\alpha_1\dots\alpha_n}$. We need to show that $F_i(x_{\alpha_1\dots\alpha_n}) \in \overline{P}$.

Define the following sequence $\{y_k\}_{k\in \N}$ in
$P$: \\
for each $k$, we let $y_k$ be the fixed point of $F_i\circ F_{\alpha_1\dots\alpha_n}^k$. So
$y_1=x_{i\alpha_1\dots\alpha_n}$, $y_2=x_{i\alpha_1\dots\alpha_n\alpha_1\dots\alpha_n}$; etc.

If $N :=\max\{\alpha_1,\dots,\alpha_n,i\}$, then
$x_{\alpha_1\dots\alpha_n}\in P_N$ and $y_k\in P_N$ for all $k$.
Therefore,
$$d(y_k,F_i(x_{\alpha_1\dots\alpha_n}))\leq
r_id(F_{\alpha_1\dots\alpha_n}^k(y_k),x_{\alpha_1\dots\alpha_n})
 < (r_{\alpha_1}\dots r_{\alpha_n})^k \diam~P_N.$$

Hence, since $r_{\alpha_1}\dots r_{\alpha_n}<1$ and $\diam~P_N<\infty$,
we have
\begin{equation*} \lim_{k\rightarrow\infty}y_k=
F_i(x_{\alpha_1\dots\alpha_n}), \end{equation*}
which implies that $F_i(x_{\alpha_1\dots\alpha_n})\in \overline{P}$, as we wanted to show.

For the other inclusion we will show that $P\subset \bigcup
F_i(\overline{P})$. Let $x_{\alpha_1\dots\alpha_n}\in P$ and
consider the sequence $\{z_k\}_{k\in\N}$, where $z_k$ is the fixed
point of the composition
$F_{\alpha_2\dots\alpha_n}\circ
F_{\alpha_1\dots\alpha_n}^k$. As before, if
$N :=\max\{\alpha_1,\dots,\alpha_n\}$ then
$x_{\alpha_1\dots\alpha_n}\in P_N$ and $z_k\in P_N$ for all $k$.
Consecuently,
\begin{align*}d(z_k,F_{\alpha_2\dots\alpha_n}(x_{\alpha_1\dots\alpha_n}))&\leq
r_{\alpha_2}\dots r_{\alpha_n} (r_{\alpha_1}\dots r_{\alpha_n})^k
d(z_k,x_{\alpha_1\dots\alpha_n})\\
&<(r_{\alpha_1}\dots r_{\alpha_n})^k \diam~P_N,
\end{align*}
which implies
$\di\lim_{k\rightarrow\infty}z_k = F_{\alpha_2\dots\alpha_n}(x_{\alpha_1\dots\alpha_n})
\in \overline{P}$ and therefore
$$x_{\alpha_1\dots\alpha_n}=F_{\alpha_1}(F_{\alpha_2\dots\alpha_n}(x_{\alpha_1\dots\alpha_n}))\in
F_{\alpha_1}(\overline{P}).$$

Thus, the closure of $P$ is an invariant set for $\FF$.

In order to show that $\overline{P}$ is the smallest invariant set,
let $A$ be an invariant set for $\FF$. By Definition~\ref{infinite-invariant}
$A$ is non-empty and closed and satisfies
$F_i(A)\subset A$ for all $i\in\N$. By Lemma~\ref{b6} we obtain that
$P\subset A$, therefore $\overline{P}\subset A$.

\end{proof}

\begin{rem1} Notice that in the previous theorem we do not assume that  $D$ is bounded
neither $r<1$. Therefore, for all countable family of contraction
maps there exists an invariant set.
\end{rem1}

\begin{rem1} The assertion that \emph{$\overline{P}$ is an invariant set}
can also be obtained using Hutchinson's Theorem \ref{h1} and a result
in \cite{Sec12}.
\end{rem1}

Recalling the results of Bandt, \cite{Ban89}, which show that if
$r<1$ and  $D$ is bounded, there exists a unique closed and
\textbf{bounded invariant set} with respect to $\FF$; using
Theorem~\ref{b7}, this unique set must necessarily coincide with
$\overline{P}$ (see also \cite{MM09} and \cite{Hil12}). Note that
Bandt also shows that this set is not necessarily compact. This
extends completely the result of Hutchinson to the countable IFS
case:

\begin{cor}\label{w9} Let $\mathcal{F}=\{F_i:i\in\N\}$ be a countable family of contraction maps in the complete metric space $(X,d)$.
If $r:=\sup_{i\in\N} \text{Lip}(F_i)<1$ and $D$, the set of fixed
points of  elements of $\FF$, is bounded, then $\overline{P}$ is the
unique bounded invariant set for $\mathcal{F}$, where $P$ is the set
of fixed points of finite compositions of members of $\FF$.
\end{cor}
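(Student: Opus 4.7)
The plan is to combine Theorem~\ref{b7} with the result of Bandt~\cite{Ban89} cited in the paragraph preceding the corollary. Under the hypotheses $r<1$ and $D$ bounded, Bandt's theorem guarantees the existence of a unique closed and bounded invariant set, which I will call $K$. So existence is already handled; what remains is to identify $K$ with $\overline{P}$.

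First I would invoke Theorem~\ref{b7} to conclude that $\overline{P}$ is an invariant set for $\FF$, and moreover the smallest such set with respect to inclusion. In particular $\overline{P}\subseteq K$. Since $K$ is bounded by hypothesis, $\overline{P}$ is automatically bounded as well, and of course closed by construction. Thus $\overline{P}$ is itself a closed, bounded, invariant set for $\FF$. Then by the uniqueness part of Bandt's theorem, $\overline{P}=K$, which proves that $\overline{P}$ is the unique bounded invariant set for $\FF$.

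There is no genuine obstacle here; the corollary is really just the conjunction of the two ingredients. The only conceptual point worth noting is that Theorem~\ref{b7} produces $\overline{P}$ without any assumption on $r$ or on the boundedness of $D$, and it is precisely the extra hypotheses $r<1$ and $D$ bounded that upgrade the existence of a smallest invariant set to the existence of a bounded one, which then must coincide with $\overline{P}$ by minimality. If one preferred a self-contained argument avoiding a direct appeal to Bandt's uniqueness, one could instead verify directly that any bounded invariant set $A$ satisfies $A=\overline{P}$: containment $\overline{P}\subseteq A$ follows from Lemma~\ref{b6}, and the reverse containment would be obtained by iterating the invariance relation $A=\overline{\bigcup_i F_i(A)}$ and using $r<1$ together with boundedness of $A$ and $D$ to show that every point of $A$ can be approximated by fixed points of finite compositions, exactly as in the classical Hutchinson argument.
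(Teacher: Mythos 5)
Your argument is correct and is essentially the paper's own: the paper proves this corollary exactly by combining Bandt's existence--uniqueness result with the minimality of $\overline{P}$ from Theorem~\ref{b7}, so that $\overline{P}\subseteq K$ forces $\overline{P}$ to be closed, bounded and invariant, hence equal to $K$ by uniqueness. Nothing is missing.
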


From Theorem \ref{b7} it follows that the boundedness of $D$ is a
necessary condition for the existence of a bounded invariant set.
Indeed, since every invariant set contains $D$, if there exists a
bounded invariant set, then $D$ must be bounded.

On the other hand, the next example shows that the condition $r<1$
is not necessary.

\begin{ex}  For every $i\in\N$ we define $F_i:\R\to\R$,
$$F_i(x)=\left(\frac{i}{i+1}\right)x+\frac{1}{(i+1)^2}.$$
Then, the set of fixed points $D=\{1/(i+1)\}_{i\in\N}$ is bounded,
but $\sup r_i=1$. However, there exists a bounded invariant set, for
example, $[0,1/2]$. Indeed, for every $a\leq 0$ and $b\geq 1/2$, the
closed interval $[a,b]$ is an invariant set.
\end{ex}

The previous example can be extended to every countable family of
non-decreasing contractions in $\mathbb{R}$:

\begin{thm} \label{no-decreciente} Let $\mathcal{F}=\{F_i:i\in\N\}$ be a countable family of contraction maps in $\mathbb{R}$,
such that every $F_i$ is non-decreasing. If the set of fixed
points of members of $\FF$ is bounded, then there exists a bounded
invariant set for $\FF$.
\end{thm}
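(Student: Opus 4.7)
The plan is to use Theorem~\ref{b7} together with Lemma~\ref{b6}: I will exhibit a closed and bounded interval $[c,d]$ such that $F_i([c,d])\subseteq [c,d]$ for every $i\in\N$. Once this is done, Lemma~\ref{b6} immediately yields $P\subset [c,d]$, hence $\overline{P}\subset [c,d]$ is bounded, and by Theorem~\ref{b7} it is an invariant set for $\FF$.

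To construct such an interval, let $a:=\inf D$ and $b:=\sup D$, which are finite by hypothesis. I will choose any $c\leq a$ and $d\geq b$ and show that $[c,d]$ is $F_i$-invariant for every $i$. The key geometric observation in $\R$ is that a non-decreasing contraction $F_i$ with fixed point $x_i$ traps any point $x$ between $x$ itself and $x_i$: if $x\geq x_i$, then monotonicity gives $F_i(x)\geq F_i(x_i)=x_i$, while the contraction property gives $F_i(x)-x_i=|F_i(x)-F_i(x_i)|\leq r_i(x-x_i)<x-x_i$, so $x_i\leq F_i(x)\leq x$. The symmetric argument handles $x\leq x_i$. Consequently, for $x\in[c,d]$, $F_i(x)$ lies either in $[x_i,x]\subseteq[a,d]$ or in $[x,x_i]\subseteq[c,b]$, so $F_i(x)\in[c,d]$.

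With $A:=[c,d]$, the set $A$ is non-empty and closed, and satisfies $F_i(A)\subset A$ for every $i\in\N$, so Lemma~\ref{b6} gives $P\subset A$ and hence $\overline{P}\subset[c,d]$. By Theorem~\ref{b7}, $\overline{P}$ is invariant for $\FF$, and it is bounded, which is the conclusion.

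There is no real obstacle in this proof: the combinatorial structure does all the work via Theorem~\ref{b7}, and the only substantive step is the one-variable real-analytic observation that a non-decreasing contraction moves every point monotonically toward its fixed point. The essential use of the hypotheses is that (i) monotonicity is preserved under composition (which lets Lemma~\ref{b6} propagate $F_i$-invariance of $[c,d]$ to all finite compositions via their fixed points), and (ii) the order structure of $\R$ allows the trapping interval argument, which would fail without monotonicity, as shown by the example in the paper of a bounded $D$ admitting no bounded invariant set.
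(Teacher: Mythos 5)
Your proof is correct and takes essentially the same route as the paper: both use the interval spanned by the set of fixed points (the paper takes $[\inf D,\sup D]$ itself, you allow any $[c,d]$ containing it), show it is mapped into itself by each $F_i$ via monotonicity plus the contraction property, and then apply Lemma~\ref{b6} together with Theorem~\ref{b7} to get that $\overline{P}$ is a bounded invariant set.
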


\begin{proof}

 Let $D$ be the set of fixed points of members of $\FF$, and let us consider $\alpha=\inf D$ and
$\beta=\sup D$. We will show that the interval $I=[\alpha,\beta]$
satisfies $F_i(I)\subset I$ for all $i\in \N$.

Since every $F_i$ is continuous and non-decreasing, we have that
$F_i(I)=[F_i(\alpha), F_i(\beta)]$. Further, since $F_i$ is
contractive, $F_i(\beta)-F_i(x_i)<\beta-x_i$ and
$F_i(x_i)-F_i(\alpha)<x_i-\alpha$. Therefore $\alpha<F_i(\alpha)\leq
F_i(\beta)<\beta$. Thus, $F_i(I)\subset I$.

The conclusion now follows from Lemma \ref{b6}.
\end{proof}

However, in the general case, the boundedness of $D$ is not a
sufficient condition  for the existence of a bounded invariant set, since it can be the case (as we show below) that $D$ is bounded, but $P$ is not.

\begin{ex}  {\rm We consider the contraction maps in $\mathbb{R}$ defined by:}
$$F_i(x)=-\frac{i}{i+1}x+\frac{2i+1}{i}~~~~;~~~~\widetilde{F}_i(x)=-\frac{i}{i+1}x+\frac{1}{i+1}~~~~, i\in\mathbb{N}.$$
\end{ex}
Let $\FF$ be the countable family
$\FF := \{F_i:i\in\N\}\cup\{\widetilde{F}_i:i\in\N\}$. The set of fixed
points is contained in [0,2]. However, there does not exist any bounded
invariant set for \F.

To see this, we consider the compositions $\widetilde{F}_i\circ F_i$
and ${F}_i\circ \widetilde{F}_i$ ($i\in\N$) and look at the set
$\{y_i: \widetilde{F}_i\circ F_i(y_i) = y_i \}\cup\{ z_i: {F}_i\circ
\widetilde{F}_i(z_i)=z_i\}$. A simple computation shows that the
fixed point of $\widetilde{F}_i\circ F_i$ is $y_i =
-\frac{2i(i+1)}{2i+1}$ and the fixed point of ${F}_i\circ
\widetilde{F}_i$ is $z_i = \frac{-i^2+(i+1)^2(2i+1)}{i(2i+1)}$.
Thus, the set $P$ can not be bounded. \hfill $\Box$

We will conclude this section proving a kind of converse to the Theorem by Bandt. In some cases we will be able to prove that, if there exists a unique bounded invariant set, then necessarily $r = \sup_i r_i <1$. We begin by proving a general lemma about enlargements of an invariant set $A$. Recall that the $\varepsilon$-enlargement of a set $A$ in a metric space $(X, d)$ is defined by:
\begin{equation}\label{enlargement}
A_\varepsilon := \{y \in X: d(y,A) < \varepsilon\}.
\end{equation}

\begin{lem} \label{T4} Let $\FF$ be a countable family of contraction maps in a complete metric space $(X, d)$, and let $A$ an invariant set for $\FF$. If $\alpha > 0$, then $F_i(A_\alpha)\subset A_\alpha$ for all $i$.
\end{lem}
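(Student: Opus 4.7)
The plan is to unpack the definitions and push the distance inequality through the contraction. The key observation is that invariance of $A$ gives us the one-sided containment $F_i(A)\subset A$ for each $i$, since $F_i(A)\subset\bigcup_j F_j(A)\subset\overline{\bigcup_j F_j(A)}=A$. Once this is recorded, the lemma becomes a direct distance-comparison argument.

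First I would fix $i\in\N$, let $\alpha>0$, and take an arbitrary point $y\in A_\alpha$, so that $d(y,A)<\alpha$ by \eqref{enlargement}. By the definition of the infimum, I can choose $a\in A$ with $d(y,a)<\alpha$. Since $F_i$ is a contraction with factor $r_i<1$, I have
$$d(F_i(y),F_i(a))\le r_i\, d(y,a)<r_i\alpha<\alpha.$$
Using the containment $F_i(A)\subset A$ noted above, the point $F_i(a)$ lies in $A$, and therefore
$$d(F_i(y),A)\le d(F_i(y),F_i(a))<\alpha,$$
which shows $F_i(y)\in A_\alpha$. Since $y$ was arbitrary in $A_\alpha$, this gives $F_i(A_\alpha)\subset A_\alpha$.

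There is essentially no obstacle here; the only small points to be careful about are (i) using the strict inequality version of the infimum characterization to pick $a$ close enough to $y$, so that the intermediate bound $r_i d(y,a)<\alpha$ is genuinely strict (this matters because $A_\alpha$ is defined by a strict inequality), and (ii) deriving $F_i(A)\subset A$ from the invariance identity rather than assuming it separately. Both are immediate from the definitions given in the paper.
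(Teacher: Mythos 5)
Your proof is correct and follows essentially the same argument as the paper: pick a witness $a\in A$ within distance $\alpha$ of the given point and push it through the contraction, using $F_i(A)\subset A$ (which the paper states tersely as a consequence of invariance and you spell out explicitly). No issues.
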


\begin{proof}
Let $x\in A_\alpha$. Then, by definition of $A_{\alpha}$, there is
$y\in A$ such that $d(x,y)<\alpha$. Since $A$ is invariant, $F_i(y)\in A$.
Moreover, $d(F_i(x),F_i(y))\leq r_i \cdot d(x,y)<\alpha$ and therefore
$F_i(x)\in A_\alpha$.
\end{proof}

For $X = \R$, and $F_i$ similarities, we can sharpen the previous result:
\begin{thm} \label{lineales en R} Let $\FF$ a countable family of contractive similarities in $\R$ ($|F_i(x)-F_i(y)|= r_i|x-y|$ for all $x, y\in\R$, $r_i < 1$). Let  $r:=\sup r_i = 1$. If there exists a bounded set $A$ that is invariant for $\FF$, then there exists  $\alpha>0$ such that $\overline{A_\alpha}$ is invariant. (Hence if a bounded invariant set exists, it is not unique, in contrast to the case $r < 1$!)
\end{thm}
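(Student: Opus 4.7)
The plan is to show that for an arbitrary $\alpha > 0$, $\overline{A_\alpha}$ is itself invariant. The inclusion $\overline{\bigcup_i F_i(\overline{A_\alpha})}\subset\overline{A_\alpha}$ is routine: Lemma~\ref{T4} together with continuity of each $F_i$ gives $F_i(\overline{A_\alpha})\subset \overline{A_\alpha}$, and taking unions and closures preserves this. The real content is the reverse inclusion, and I would deduce it from the stronger pointwise statement $A_\alpha \subset \bigcup_i F_i(A_\alpha)$, from which closure completes the argument.

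To produce, for each $z\in A_\alpha$, an index $i$ with $F_i^{-1}(z)\in A_\alpha$, I would extract a limiting isometry from $\FF$. Write $F_i(x)=\epsilon_i r_i x + b_i$ with $\epsilon_i\in\{\pm 1\}$. Since $A$ is bounded and contains $D=\{x_i\}$, the fixed points and hence the translations $b_i$ are bounded (via $b_i = (1-\epsilon_i r_i)x_i$). Choose $i_k$ with $r_{i_k}\to 1$; refining, I may assume $\epsilon_{i_k}\equiv\epsilon$ is constant and $b_{i_k}\to b_\infty$. Then $F_{i_k}\to G$ uniformly on bounded sets, where $G(x)=\epsilon x + b_\infty$ is an isometry. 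From $F_{i_k}(A)\subset A$ and closedness of $A$, I obtain $G(A)\subset A$; when $\epsilon=+1$ the boundedness of $A$ forces $b_\infty=0$ (alternatively, $b_{i_k}=(1-r_{i_k})x_{i_k}\to 0$ directly), while when $\epsilon=-1$ the involutive identity $G^2=\mathrm{id}$ upgrades $G(A)\subset A$ to $G(A)=A$. In either case $G$ is an isometric bijection of $A$, so $G^{-1}(A_\alpha)=A_\alpha$.

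With this in hand, the final step is short. For $z\in A_\alpha$, $F_{i_k}^{-1}(z)\to G^{-1}(z)\in A_\alpha$, and since $A_\alpha$ is open in $\R$, $F_{i_k}^{-1}(z)\in A_\alpha$ for all sufficiently large $k$; hence $z=F_{i_k}\bigl(F_{i_k}^{-1}(z)\bigr)\in F_{i_k}(A_\alpha)$, proving $A_\alpha\subset\bigcup_i F_i(A_\alpha)$. Taking closures yields invariance of $\overline{A_\alpha}$, and the parenthetical non-uniqueness is immediate since $\overline{A_\alpha}\supsetneq A$ for any bounded $A$. The main obstacle I anticipate is the orientation-reversing case: one must first pass to a subsequence along which $b_{i_k}$ actually converges, and then carefully verify that the limit reflection sends $A$ onto (rather than merely into) itself, so that it acts as a bijective isometry on the enlargement.
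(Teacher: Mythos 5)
Your argument is correct, but it follows a genuinely different route from the paper's. The paper fixes one particular (large) $\alpha$, chosen so that $A_\alpha$ is a single open interval $I=(a,b)$, and then argues by an elementary length count: for $x\in I$ and $\delta=\max\{b-x,x-a\}$ it picks an index with $r_i>\delta/(b-a)$, so that the subinterval $F_i(I)\subset I$, having length $r_i(b-a)>\delta$, cannot avoid $x$; this gives $I\subset\bigcup_i F_i(I)$ and hence invariance of $\overline{I}$, but only for that special enlargement. You instead extract a subsequence with $r_{i_k}\to 1$, constant orientation and convergent translations (boundedness of the translations coming from $D\subset A$ via $b_i=(1-\epsilon_i r_i)x_i$), obtain a limiting isometry $G$ with $G(A)=A$ (identity in the orientation-preserving case, an involution in the reflecting case), and conclude $G^{\pm1}(A_\alpha)=A_\alpha$, whence $F_{i_k}^{-1}(z)\to G^{-1}(z)\in A_\alpha$ and openness of $A_\alpha$ finishes the reverse inclusion. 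This is more structural and buys a strictly stronger conclusion: $\overline{A_\alpha}$ is invariant for \emph{every} $\alpha>0$, with no need for $A_\alpha$ to be connected; it also suggests the right generalization to $\R^n$ (where your involution trick would be replaced by compactness of the orthogonal part together with surjectivity of isometries of compact sets), whereas the paper's interval argument is intrinsically one-dimensional and is redone coordinatewise in their $\R^n$ theorem. The points you flag as delicate (passing to a subsequence where $b_{i_k}$ converges, and upgrading $G(A)\subset A$ to $G(A)=A$) are handled correctly by your boundedness and involution observations, so no gap remains.
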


\begin{proof}

Let $\alpha > 0$ be such that $A_\alpha=(a,b)=I$ for some interval
$I$. By the previous Lemma, we have that $F_i(I)\subset I$ for each
$i$. Then $\overline{\bigcup_{i=1}^\infty F_i(\overline{I})}\subset
\overline{I}$.

For the other inclusion, let $x \in I$ and let $\delta=\max\{b-x,
x-a\}$. Now choose  $r_i>\delta/(b-a)$, which is possible, since
$r=1$. Since each $F_i$ is a similarity,  $F_i(I)$ is an interval
either $(F_i(a), F_i(b))$ or $(F_i(b), F_i(a))$ depending on the
monotonicity of $F_i$.

If $F_i$ is increasing,
$F_i(b)-F_i(a)=r_i(b-a)>\delta$. This implies that
$$F_i(b)-F_i(a)>b-x\quad \text{ and } \quad F_i(b)-F_i(a)>x-a,$$
 and consequently
 $$x-F_i(a)>b-F_i(b)>0\quad \text{ and } \quad F_i(b)-x>F_i(a)-a>0.$$
  Hence
$F_i(a)<x<F_i(b)$ and so $x\in F_i(I)$.

If in turn $F_i$ is decreasing, an analogous reasoning allows us to conclude that, $x\in F_i(I)$.

Hence $\overline{I}\subset\overline{\bigcup_{i=1}^\infty
F_i(\overline{I})}$.

\end{proof}

As a corollary we have a converse to the Theorem of Bandt:
\begin{cor} Let $\FF$ be a countable family of similarities in $\R$. If there exists a {\bf unique} bounded invariant set for $\FF$, then $r < 1$.
\end{cor}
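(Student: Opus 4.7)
The plan is to derive the corollary directly by contraposition from Theorem~\ref{lineales en R}. The observation that makes this work is that since every $F_i$ is assumed to be a contraction, its similarity ratio satisfies $r_i < 1$, so $r = \sup_i r_i$ lies in $(0, 1]$. Consequently, the negation of the conclusion $r < 1$ is simply the equality $r = 1$, which is precisely the hypothesis of Theorem~\ref{lineales en R}.

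So I would assume, toward a contradiction, that the hypothesis holds---there exists a unique bounded invariant set $A$ for $\FF$---but that $r = 1$. Applying Theorem~\ref{lineales en R} to $A$ then yields some $\alpha > 0$ such that $\overline{A_\alpha}$ is also an invariant set for $\FF$. Because $A$ is bounded and the $\alpha$-enlargement $A_\alpha$ in \eqref{enlargement} only adds points within distance $\alpha$ of $A$, the set $\overline{A_\alpha}$ is still bounded. Moreover, since $A$ is closed (invariance forces closedness by Definition~\ref{infinite-invariant}) and $\alpha > 0$, every point of $X \setminus A$ at distance less than $\alpha$ from $A$ lies in $A_\alpha \setminus A$; as $A$ is bounded (hence a proper subset of $\R$), such points certainly exist, and therefore $A \subsetneq \overline{A_\alpha}$.

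This produces two distinct bounded invariant sets, contradicting the uniqueness assumption. Hence $r = 1$ is impossible and we must have $r < 1$.

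I do not anticipate any real obstacle here: the content of the corollary is entirely packaged inside Theorem~\ref{lineales en R}, and the only point that needs a brief verification is the strict inclusion $A \subsetneq \overline{A_\alpha}$, which follows from the boundedness of $A$ together with $\alpha > 0$.
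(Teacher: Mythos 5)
Your proof is correct and is exactly the argument the paper intends: the corollary is stated as an immediate consequence of Theorem~\ref{lineales en R} (the parenthetical remark in that theorem already records the contrapositive), and your verification that $\overline{A_\alpha}$ is bounded and strictly larger than the closed bounded set $A$ fills in the only detail left implicit.
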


The previous theorem can be extended to similarities in $\R^n$ that
are multiples of the identity, i.e. $F_i:\R^n \rightarrow \R^n$,
$F_i(\mathbf{x})=r_i\mathbf{x}+\mathbf{b}_i$, $|r_i|<1$. Let as
before,  $r:=\sup |r_i|$.

\begin{thm} Let $\FF = \{F_i\}_{i\in \N}$, with $F_i:\R^n \rightarrow \R^n$,
$F_i(\mathbf{x})=r_i\mathbf{x}+\mathbf{b}_i$, $|r_i|<1$. Let as
before,  $r:=\sup |r_i|$, and $P$ be the subset of $\R^n$ of fixed
points of finite compositions of $\{F_i\}_{i\in \N}$. If $P$ is
bounded and $r=1$, then there exists  a rectangle $R = I_1 \times \dots \times I_n$ such that $P
\subsetneq R$ that is invariant for $\FF$.
\end{thm}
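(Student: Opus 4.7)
The plan is to reduce to the one--dimensional Theorem~\ref{lineales en R} by projecting onto each coordinate axis and then assembling the resulting 1D invariant intervals into a product rectangle. For each $k = 1, \ldots, n$, let $\pi_k : \R^n \to \R$ be the $k$-th coordinate projection, let $b_i^k$ denote the $k$-th coordinate of $\mathbf{b}_i$, and set $\FF^{(k)} := \{F_i^k\}_{i\in\N}$ with $F_i^k(t) = r_i t + b_i^k$. Since the fixed point of any $F_{i_1}\circ\cdots\circ F_{i_p}$ projects under $\pi_k$ to the fixed point of $F_{i_1}^k\circ\cdots\circ F_{i_p}^k$, the 1D analogue of $P$ coincides with $\pi_k(P)$, which is bounded because $P$ is. Moreover $\sup_i|r_i|=r=1$. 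By Theorem~\ref{b7}, $\overline{\pi_k(P)}$ is a bounded invariant set for $\FF^{(k)}$, and Theorem~\ref{lineales en R} then furnishes $\alpha_k>0$ (chosen large enough that the enlargement is a single open interval) such that $I_k:=\overline{(\overline{\pi_k(P)})_{\alpha_k}}$ is a closed interval, invariant for $\FF^{(k)}$, strictly containing $\overline{\pi_k(P)}$.

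Set $R:=I_1\times\cdots\times I_n$. Picking any $p_k\in I_k\setminus\overline{\pi_k(P)}$ yields a point of $R$ whose $k$-th coordinate lies outside $\pi_k(P)$, so that point is not in $P$; hence $P\subsetneq R$. The factorization $F_i(R)=\prod_k F_i^k(I_k)$ together with the 1D invariance $F_i^k(I_k)\subset I_k$ gives $F_i(R)\subset R$ for every $i$, and therefore $\overline{\bigcup_i F_i(R)}\subset R$.

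The reverse inclusion $R\subset\overline{\bigcup_i F_i(R)}$ is the heart of the argument and requires making the 1D proof of Theorem~\ref{lineales en R} work uniformly in all coordinates. Write $I_k=[a_k,b_k]$, let $x=(x_1,\ldots,x_n)$ lie in the interior of $R$, and set $\delta_k:=\max(b_k-x_k,\,x_k-a_k)$ and $\mu:=\max_k \delta_k/(b_k-a_k)<1$. Because $\sup_i|r_i|=1>\mu$, choose a \emph{single} index $i$ with $|r_i|>\mu$; the inequalities from the proof of Theorem~\ref{lineales en R}, split into the cases $r_i>0$ and $r_i<0$, apply coordinate-by-coordinate (the sign of $r_i$ being the same in every coordinate) to give $x_k\in F_i^k(I_k)$ for each $k$, hence $x\in F_i(R)$. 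Since the interior of $R$ is dense in $R$, we conclude $R\subset\overline{\bigcup_i F_i(R)}$, so $R$ is invariant for $\FF$.

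The main obstacle is precisely this last step: in the 1D proof the index $i$ is chosen depending on the target point $x$, and it is not a priori clear that a single $i$ can be found to serve all $n$ coordinates simultaneously. The key observation that resolves this is that the required lower bound on $|r_i|$ is the maximum of $n$ individual thresholds, each strictly below $1$ whenever $x$ lies in the interior of $R$; so the assumption $r=1$ is exactly what is needed to produce a suitable index.
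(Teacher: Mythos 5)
Your proposal is correct and follows essentially the same route as the paper: decompose each $F_i$ into its coordinate similarities, apply (the proof of) Theorem~\ref{lineales en R} in each coordinate to obtain invariant intervals $I_k$ strictly containing the projected fixed-point sets, and exploit $r=1$ to pick a \emph{single} index $i$ with $|r_i|$ exceeding the maximum of the $n$ coordinate thresholds, so that $\mathbf{x}\in F_i(R)$. Your extra care with boundary points of $R$ (restricting to interior points and using density, where the thresholds are strictly below $1$) is a detail the paper glosses over, but the argument is otherwise the same.
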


\begin{proof}
We write $\mathbf{x}=(x_1,\dots,x_n)$ and
$\mathbf{b}_i=(b_{i1},\dots,b_{in})$, thus
\begin{center}$F_i(\mathbf{x})=r_i\mathbf{x}+\mathbf{b}_i=(r_ix_1+b_{i1},...,r_ix_n+b_{in})$,
\end{center}
and we call $f_{ij}$ the maps from $\R$ to $\R$ defined by the j-th
coordinate ($f_{ij}(x_j)=r_ix_j+b_{ij}$), so
$F_i(\mathbf{x})=(f_{i1}(x_1),...,f_{in}(x_n))$
 and $f_j$ is a
contractive similarity in $\R$, of contraction factor $r_i$, for
$j=1,...,n$.

We have that $\mathbf{x}$ is a fixed point for $F_i$ if and only if
$x_j$ is a fixed point for $f_{ij}$ for $j=1, \dots,n$.

Now, for every $j=1,\dots,n$, let $\FF_j:=\{f_{ij}\}_{i\in \N}$ be
the countable IFS on the line, defined by the "coordinate" maps of
$\FF$. If $P_j$ is the set of fixed points of finite compositions of
the maps from $\FF_j$, by our assumption we have that
$\overline{P}_j$ is bounded and invariant for $\FF_j$ for $j=1,
\dots,n$. By the proof of Theorem
\ref{lineales en R}, there exist intervals $I_j$ such that
\begin{itemize}
\item $\overline{P}_j \subsetneq I_j$, and therefore $P \subsetneq R:=I_1\times\dots\times I_n$.
\item $f_{ij}(I_j)\subset I_j$ for all $i$.
\end{itemize}
Hence, since $F_i = r_i Id  +\mathbf{b}_i$ we have that
$F_i(R)=f_{i1}(I_1)\times\dots\times f_{in}(I_n)\subset
I_1\times\dots\times I_n=R$ for all $i$.

Let now $\mathbf{x}\in R$ (i.e. $x_j \in I_j, 1\leq j \leq n$). If $I_j=[\alpha_j,\beta_j]$ we take
$\delta_j:=\max\{\beta_j-x_j; x_j-\alpha_j\}$. Since $r = 1$, we
choose $i \in \N$ such that
$r_i>\max\{\frac{\delta_j}{\beta_j-\alpha_j}: 1\leq j\leq n\}$.

From the proof of Theorem \ref{lineales en R} it follows that for each $j, 1\leq j \leq n$, $x_j\in
f_{ij}(I_j)$. Hence $\mathbf{x} \in f_{i1}(I_1)\times\dots\times f_{in}(I_n) = F_i(R)$.

So $R$ is invariant for $\FF$.

\end{proof}

We again have the same corollary:
\begin{cor} Let $\FF$ is a countable family of similarities in $\R^n$, such that $F_i(\mathbf{x})=r_i\mathbf{x}+\mathbf{b}_i$, $|r_i|<1$. If there exists a {\bf unique} bounded invariant set for $\FF$, then $r:=\sup |r_i| < 1$.
\end{cor}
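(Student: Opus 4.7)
The plan is to imitate the proof of the earlier corollary in $\R$ and argue by contraposition, using the preceding theorem as the engine. The core idea: whenever $r=1$ and some bounded invariant set exists, the preceding theorem manufactures a second bounded invariant set strictly larger than $\overline{P}$, so uniqueness fails.

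First I would suppose that a bounded invariant set $A$ for $\FF$ exists while $r = \sup|r_i| = 1$, and aim to exhibit two distinct bounded invariant sets. By Theorem~\ref{b7}, the smallest invariant set $\overline{P}$ is contained in $A$; hence $\overline{P}$ is bounded and in particular $P$ is bounded. This places us squarely in the hypotheses of the preceding theorem, which hands us a rectangle $R = I_1\times\dots\times I_n$ that is invariant for $\FF$ and satisfies $P\subsetneq R$. Since each $I_j$ is a bounded interval, $R$ is itself a bounded invariant set.

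To close the argument I would show $\overline{P}\neq R$. Inspecting the proof of the preceding theorem, each $I_j$ is a strict enlargement of the already-closed set $\overline{P}_j$, so $\overline{P}_j\subsetneq I_j$ for every $j$. Because every fixed point of a finite composition of the $F_i$ decomposes coordinate-wise into fixed points of the associated one-dimensional compositions $f_{ij}$, we have $P\subseteq P_1\times\dots\times P_n$; since closure commutes with finite products in $\R^n$, it follows that
\[
\overline{P}\subseteq \overline{P}_1\times\dots\times \overline{P}_n \subsetneq I_1\times\dots\times I_n = R.
\]
Thus $\overline{P}$ and $R$ are two distinct bounded invariant sets for $\FF$, contradicting uniqueness, and we conclude $r<1$.

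The only mildly delicate point is the strict containment $\overline{P}\subsetneq R$: the preceding theorem only asserts $P\subsetneq R$, so I need to trace through its construction to see that the strict gap persists after passing to closures, which it does because the $I_j$ were built as genuine positive enlargements of the already-closed factor sets $\overline{P}_j$.
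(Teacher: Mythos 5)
Your proposal is correct and follows essentially the argument the paper intends: the corollary is obtained by contraposition from the preceding theorem, which under $r=1$ and boundedness of $P$ (guaranteed by Theorem~\ref{b7} once a bounded invariant set exists) produces a second bounded invariant set, the rectangle $R$. Your extra care in checking $\overline{P}\subsetneq R$ via $\overline{P}\subseteq\overline{P}_1\times\dots\times\overline{P}_n\subsetneq I_1\times\dots\times I_n$ fills in a detail the paper leaves implicit, but it is the same route.
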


\section{Invariant Measures}

In addition to providing a complete proof of Hutchinson's theorem
for the most general case, Kravchenko in \cite{Kra06} generalizes
Hutchinson's theorem \ref{h1} to the case of a countable set of maps
and he gives a sufficient condition for the existence and uniqueness
of an invariant measure. We first need to recall the following
definition.
\begin{defn} A measure $\nu$ is \emph{separable} if there exists a separable Borel set
$A\subset X$ such that $\nu(X\setminus A)=0$. \end{defn}

Note that if $\nu$ is a finite measure, $\nu$ is separable if and
only if $\nu(X\setminus \text{supp}~\nu)=0$.

\begin{thm}\cite{Kra06} \label{b4} Let $(X,d)$ be a complete metric space. Let
$\mathcal{F}=\{F_i:i\in\N\}$ be a countable number of contractions
with fixed points  $x_i$. Let $\rho=\{\rho_i\}_{i\in\N}$, be a probability sequence, i.e. $0<\rho_i<1$
and $\sum_{i=1}^\infty \rho_i=1$. If $\sum_{i=1}^\infty
\rho_i~d(x_1,x_i)<\infty$, then there exists a unique measure
$\mu\in M_s(X)$ that is invariant with respect to $(\FF,\rho)$, i.e.
$$\mu=\sum_{i\in\mathbb{N}}\rho_i {F_i}_\sharp\mu.$$
Here $M_s(X)$ is the space of all separable probability measures
that satisfy $\int f~d\mu<\infty$ for all $f:X\to\R$ with finite
Lipschitz constant.
\end{thm}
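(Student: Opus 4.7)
The plan is to realize the desired $\mu$ as the unique fixed point of the Hutchinson-type operator
$$T\mu \ := \ \sum_{i=1}^\infty \rho_i\, (F_i)_\sharp \mu$$
acting on $M_s(X)$, endowed with the Wasserstein (Kantorovich--Rubinstein) metric
$$W(\mu,\nu) := \sup\left\{ \left|\int f\, d\mu - \int f\, d\nu\right| : f : X \to \R,\ \text{Lip}(f) \leq 1 \right\},$$
and then invoke Banach's fixed-point theorem. On $M_s(X)$ this supremum is finite (any $1$-Lipschitz $f$ satisfies $|f(x)-f(x_1)|\leq d(x,x_1)$, and $d(\cdot,x_1)$ is integrable against any $\mu\in M_s(X)$), and $(M_s(X),W)$ is a complete metric space---this is where the separability hypothesis earns its keep.

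The first step will be to verify that $T$ sends $M_s(X)$ into itself. That $T\mu$ is a probability measure is immediate from $\sum\rho_i=1$, and separability is inherited because if $\mu$ is concentrated on a separable Borel set $A$, then $T\mu$ is concentrated on $\overline{\bigcup_i F_i(A)}$, a countable union of separable continuous images. The delicate point---and what I expect to be the main obstacle---is the first-moment bound for $T\mu$. Splitting the distance through the fixed point $x_i$,
$$d(F_i(y),x_1) \leq d(F_i(y),F_i(x_i)) + d(x_i,x_1) \leq r_i\, d(y,x_1) + (1+r_i)\, d(x_1,x_i),$$
so integrating against $\mu$ and summing over $i$ with weights $\rho_i$ gives
$$\int d(\cdot,x_1)\, dT\mu \ \leq\ \int d(\cdot,x_1)\, d\mu \ +\ 2\sum_{i=1}^\infty \rho_i\, d(x_1,x_i),$$
finite precisely because of Kravchenko's moment hypothesis. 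Without that hypothesis, $T$ need not preserve finite first moment, so this is the essential structural ingredient.

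The contraction estimate is then the familiar one. For any $1$-Lipschitz $f$, the composition $f\circ F_i$ is $r_i$-Lipschitz; when $r_i>0$ the function $(f\circ F_i)/r_i$ is a valid test function for $W$, yielding $\left|\int(f\circ F_i)\,d(\mu-\nu)\right|\leq r_i W(\mu,\nu)$, while $r_i=0$ forces $F_i$ to be constant and the term vanishes. Combining, and using absolute convergence (justified by the previous moment bound) to swap integral and series,
$$\left|\int f\,dT\mu-\int f\,dT\nu\right|\ \leq\ \sum_{i=1}^\infty \rho_i\, r_i\, W(\mu,\nu),$$
hence $W(T\mu,T\nu)\leq c\cdot W(\mu,\nu)$ with $c:=\sum_i \rho_i r_i$. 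The crucial observation is that $c<1$ \emph{strictly}, not merely $c\leq 1$: since $\rho_i>0$ and $r_i<1$ for every $i$, one has $1-c=\sum\rho_i(1-r_i)\geq \rho_1(1-r_1)>0$, even though $\sup_i r_i$ may equal $1$.

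With $T$ a strict contraction of the complete metric space $(M_s(X),W)$ into itself, Banach's theorem delivers a unique fixed point, the sought invariant measure. The only non-routine ingredients are the preservation statement $T(M_s(X))\subset M_s(X)$ (governed by the summability hypothesis) and completeness of the separable Wasserstein space; everything else is a direct calculation.
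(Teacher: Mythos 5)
The paper gives no proof of this statement: Theorem \ref{b4} is quoted directly from Kravchenko \cite{Kra06}, so there is no internal argument to compare yours against. Judged on its own merits, your proposal is the standard argument (and, in substance, the one in the cited source): the operator $T\mu=\sum_i\rho_i\,(F_i)_\sharp\mu$ is shown to map $M_s(X)$ into itself --- your estimate $d(F_i(y),x_1)\le r_i\,d(y,x_1)+(1+r_i)\,d(x_1,x_i)$ is exactly where the hypothesis $\sum_i\rho_i\,d(x_1,x_i)<\infty$ enters --- and to contract the Kantorovich--Rubinstein metric with ratio $c=\sum_i\rho_i r_i$, which is strictly less than $1$ because $1-c=\sum_i\rho_i(1-r_i)\ge\rho_1(1-r_1)>0$ even when $\sup_i r_i=1$. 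These computations are correct, modulo routine care you largely acknowledge: take closures so that $T\mu$ is carried by a separable \emph{Borel} set, use the moment bound to justify $\int f\,dT\mu=\sum_i\rho_i\int f\circ F_i\,d\mu$, and note that $W$ is finite and separates points on $M_s(X)$ so that it is an honest metric there.

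The one substantive caveat is the clause in which you invoke completeness of $(M_s(X),W)$. For a complete but possibly non-separable $X$ this is not folklore: it is precisely the main theorem of the paper being cited (its title \emph{is} the completeness statement), and it is the only genuinely hard ingredient of the whole proof; everything else, as you say, is direct calculation. So what you have written is a correct derivation of the theorem \emph{from} Kravchenko's completeness result rather than an independent proof. If the write-up is meant to stand alone, you must either prove that completeness or restrict to a setting (for instance $X$ Polish) where completeness of the Wasserstein-$1$ space over the measures with finite first moment is classical; otherwise, cite \cite{Kra06} for that step explicitly.
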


Note that the hypothesis of this theorem are slightly weaker than
the ones of Bandt, since $r = \di\sup_{i\in\N}
\text{Lip}(F_i)$ is not required to be strictly smaller than $1$. Moreover, if $D$ is bounded and $\sum_{i=1}^\infty \rho_i=1$ we have
$$\sum_i\rho_id(x_1,x_i)\leq\sum_i\rho_i \diam(D)=\diam(D)<\infty.$$
Hence, in the case of bounded $D$, for each (countable) probability
sequence we have a measure in $M_s(X)$
that is invariant with respect to $(\FF,\rho)$, (independently of
the value of $r$).

\begin{rem1}
In \cite{MM09} Mihail and Miculescu worked with Infinite Iterated Function System
with the same hypothesis than Bandt in \cite{Ban89} from a different viewpoint. They showed that for these IIFS, the unique invariant
set for $\mathcal{F}$ (which is bounded because of the hypothesis on the IIFS) coincides with the closure of the {\em canonical projection} of the shift space.

In this case, analogously to the finite case, one has immediately that the unique invariant measure is
$\pi_\sharp\tau$, and that the support  of this measure is the unique invariant set for $\mathcal{F}$ (here $\tau$ is
the product measure on $\mathbb{N}^\mathbb{N}$ induced by
$\rho(i)=\rho_i$ on each factor).

Their results rely strongly on the fact that the set of fixed points is bounded and the suppremum of the Lipschitz constants of the system is strictly smaller than $1$.
\end{rem1}

We will prove next that the support of any invariant measure for
$(\FF,\rho)$ must coincide with the smallest invariant set for
$\FF$, where $\rho$ is a probability
sequence, even for the case $r=1$.

We start proving that if $\mu$ is an invariant measure for
$(\FF,\rho)$ then its support is an invariant set for $\FF$. We
first need the following result:

\begin{lem} \label{r3} Let $F:X\to X$ be any Lipschitz map and let $\mu$ be a measure in $X$.
Then $F(\text{supp}~\mu)\subset \text{supp}~F_\sharp\mu$.
\end{lem}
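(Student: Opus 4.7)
The plan is to exploit the characterization of the support in terms of open neighborhoods: a point $x$ lies in $\mathrm{supp}~\mu$ if and only if every open neighborhood of $x$ has positive $\mu$-measure. Combined with the continuity of $F$ (which is immediate from the Lipschitz hypothesis), the inclusion then reduces to a one-line verification using the definition of the push-forward $F_\sharp\mu$.

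Concretely, I would proceed as follows. Fix $x \in \mathrm{supp}~\mu$ and set $y := F(x)$; the goal is to show that $y \in \mathrm{supp}~F_\sharp\mu$. Let $V$ be an arbitrary open neighborhood of $y$. Since $F$ is Lipschitz, it is continuous, so $F^{-1}(V)$ is an open set, and it contains $x$ because $F(x) = y \in V$. As $x$ lies in $\mathrm{supp}~\mu$, every open neighborhood of $x$ receives positive $\mu$-mass, and in particular $\mu(F^{-1}(V)) > 0$. But by the very definition of the push-forward,
\begin{equation*}
F_\sharp\mu(V) = \mu(F^{-1}(V)) > 0,
\end{equation*}
which shows that every open neighborhood of $y$ has positive $F_\sharp\mu$-measure. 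Hence $y \in \mathrm{supp}~F_\sharp\mu$, and since $x$ was arbitrary, $F(\mathrm{supp}~\mu) \subset \mathrm{supp}~F_\sharp\mu$.

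There is no real obstacle here: the Lipschitz hypothesis is used only through continuity, and the rest is bookkeeping with the definitions. The only point that deserves a brief justification (and which I would include for completeness) is the neighborhood characterization of the support; this holds for outer measures on a metric space in the form needed, since we only require the implication \emph{if $U$ is open and $\mu(U)=0$ then every point of $U$ is excluded from $\mathrm{supp}~\mu$}, and the contrapositive is exactly what the argument uses. Note also that closing $F(\mathrm{supp}~\mu)$ on the left is unnecessary, because we have established the inclusion pointwise into the closed set $\mathrm{supp}~F_\sharp\mu$.
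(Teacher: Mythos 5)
Your proof is correct and follows essentially the same argument as the paper: in both cases one shows that the preimage under $F$ of any neighborhood of $y=F(x)$ contains a neighborhood of $x$, which has positive $\mu$-measure, and then invokes $F_\sharp\mu(V)=\mu(F^{-1}(V))$. The only cosmetic difference is that the paper produces an explicit ball $B\bigl(x,\varepsilon/\mathrm{Lip}(F)\bigr)$ inside $F^{-1}(B(y,\varepsilon))$ using the Lipschitz constant, whereas you use only continuity of $F$, which is a harmless (in fact slightly more general) variant of the same idea.
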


\begin{proof}~

Let us consider $y=F(x)$ for some $x\in \text{supp}~\mu$. In order
to prove that $y\in \text{supp}~F_\sharp\mu$, we need to show that
any ball centered at $y$ has positive $F_\sharp\mu$-measure. We take
$\varepsilon>0$ and consider the ball $B(x,\delta) :=\{z\in X:
d(x,z)<\delta\}$, with $\delta=r^{-1}\varepsilon$ where $r=Lip(F)$.
Since $F$ is Lipschitz,
$$F(B(x,\delta))\subset B(F(x),r\delta)=B(y,\varepsilon),$$
and then,
$$B(x,\delta)\subset F^{-1}(B(y,\varepsilon)).$$
Hence,
$$F_\sharp\mu(B(y,\varepsilon))=\mu(F^{-1}(B(y,\varepsilon)))\geq\mu(B(x,\delta))>0,$$
because $x\in \text{supp}~\mu$. Thus $y\in \text{supp}~F_\sharp\mu$.
\end{proof}

We are now ready to prove the announced theorem.
\begin{thm} \label{b5} Let $(X,d)$ be complete metric space. Let
$\mathcal{F}=\{F_i:i\in\N\}$ be a countable family of contraction
maps on $X$ and let $\rho=\{\rho_i:i\in\N\}$ be a probability
sequence, i.e., $0<\rho_i<1$ and $\sum_{i=1}^\infty \rho_i=1$. If
$\mu$ is an invariant measure for $(\FF,\rho)$, then the support of
$\mu$ is an invariant set for $\mathcal{F}$.
\end{thm}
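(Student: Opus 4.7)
My plan is to write $S := \text{supp}~\mu$ and verify the two inclusions that together say $S = \overline{\bigcup_i F_i(S)}$, i.e., that $S$ is an invariant set. The two ingredients I will use are Lemma \ref{r3} and the invariance identity $\mu = \sum_i \rho_i F_{i\sharp}\mu$, plus the elementary link between positive $\mu$-measure of open sets and meeting $\text{supp}~\mu$.

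For the inclusion $\overline{\bigcup_i F_i(S)} \subset S$, the key observation is that invariance forces $\text{supp}~F_{i\sharp}\mu \subset S$ for every $i$. Indeed, $\rho_i F_{i\sharp}\mu \leq \mu$ as outer measures, so every open $\mu$-null set is $F_{i\sharp}\mu$-null, and unraveling the definition of support (complement of the largest open null set) gives $\text{supp}~F_{i\sharp}\mu \subset S$. Chaining this with Lemma \ref{r3}, which says $F_i(S) \subset \text{supp}~F_{i\sharp}\mu$, yields $F_i(S) \subset S$ for every $i$; since $S$ is closed the inclusion follows.

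For the reverse inclusion $S \subset \overline{\bigcup_i F_i(S)}$, fix $x \in S$ and $\varepsilon > 0$. Since $x \in \text{supp}~\mu$ we have $\mu(B(x,\varepsilon)) > 0$, so the invariance identity gives some index $i$ with $\mu(F_i^{-1}(B(x,\varepsilon))) = F_{i\sharp}\mu(B(x,\varepsilon)) > 0$. The preimage $U := F_i^{-1}(B(x,\varepsilon))$ is open by continuity of $F_i$, and any open set of positive $\mu$-measure must meet $\text{supp}~\mu$; picking $y \in U \cap S$, we get $F_i(y) \in B(x,\varepsilon) \cap F_i(S)$. Letting $\varepsilon \downarrow 0$ places $x$ in $\overline{\bigcup_i F_i(S)}$.

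The one step that requires care, and is the main obstacle, is the claim that an open set of positive $\mu$-measure must intersect $\text{supp}~\mu$. This is automatic on a separable metric space (a countable sub-basis covers the complement of the support by open null sets), but not for arbitrary measures on non-separable spaces. The paper's framework (Kravchenko's separable measures, or more generally restricting to a separable Borel set of full $\mu$-measure) sidesteps this. Non-emptiness of $S$, which is needed for $S$ to qualify as an invariant set in the sense of Definition \ref{infinite-invariant}, is implicit in $\mu$ being a nonzero (probability) invariant measure.
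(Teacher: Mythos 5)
Your proof is correct and follows essentially the same route as the paper: the forward inclusion via Lemma \ref{r3} together with $\operatorname{supp} F_{i\sharp}\mu \subset \operatorname{supp}\mu$, and the reverse inclusion by extracting an index $i$ with $\mu(F_i^{-1}(B(x,\varepsilon)))>0$ and intersecting the preimage with the support. Your closing remark about needing ``open sets of positive measure meet the support'' (automatic in the separable setting) is a point the paper uses silently in the step $F_i^{-1}(B(a,\varepsilon))\cap A\neq\emptyset$, so flagging it is a small improvement rather than a deviation.
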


\begin{proof}~

Assume that $\mu$ is an invariant measure for $(\FF,\rho)$ and let
$A := \text{supp}\mu$. Then, by Definition~\ref{infinite-invariant},
$$\mu=\sum_{i=1}^\infty\rho_i {F_i}_\sharp\mu,$$
where ${F_i}_\sharp\mu$ is the measure defined by
${F_i}_\sharp\mu(E)=\mu(F_i^{-1}(E))$ for each $E\subset X$. By
Lemma \ref{r3} we have that $F_i(A)\subset
\text{supp}~{F_i}_\sharp\mu$ for all $i$. Further, it is clear that
$$\text{supp}~{F_i}_\sharp\mu\subset  \text{supp}~\left(\sum_{i=1}^\infty\rho_i{F_i}_\sharp\mu\right)= \text{supp}~\mu = A.$$
Hence, $\bigcup_{i=1}^\infty F_i(A)\subset A$. Since $A$ is closed,
we obtain $\overline{\bigcup_{i=1}^\infty F_i(A)}\subset A$.

On the other hand, let $a\in A$ and $\varepsilon>0$. Since
$\mu(B(a,\varepsilon))>0$ and
$\mu=\sum_{i=1}^\infty\rho_i{F_i}_\sharp\mu$, there must exist $i$ such
that $\mu(F_i^{-1}(B(a,\varepsilon)))>0$. Consequently
$$F_i^{-1}(B(a,\varepsilon))\cap A\neq\emptyset.$$
That is, there exists $x\in A$ such that $d(F_i(x),a)<\varepsilon$.
Then $a\in \overline{\bigcup_{i=1}^\infty F_i(A)}$. Thus, $A\subset
\overline{\bigcup_{i=1}^\infty F_i(A)}$.

The proof is complete.
\end{proof}

From this result and Theorem \ref{b7} one deduces that the support
of any invariant measure for $(\FF,\rho)$ contains the set $\overline{P}$, the closure of
the set of fixed points of finite compositions of members of $\FF$.
In the following theorem we prove that indeed the support of $\mu$ is equal to $\overline{P}$.

\begin{thm} Let $(X,d)$ be a complete metric space. Let
$\mathcal{F}=\{F_i:i\in\N\}$ be a countable family of contraction
maps on $X$ and let $\rho=\{\rho_i:i\in\N\}$ be a probability
sequence, i.e., $0<\rho_i<1$ and $\sum_{i=1}^\infty \rho_i=1$. If
$\mu$ is an invariant measure for $(\FF,\rho)$, then
$\text{supp}~\mu=\overline{P}$, where $P$ the set of fixed points of
finite compositions of members of $\FF$.
\end{thm}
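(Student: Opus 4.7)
The plan is to prove this by establishing both inclusions separately. The containment $\overline{P}\subseteq\text{supp}~\mu$ is essentially free from what has already been proved: by Theorem~\ref{b5}, $\text{supp}~\mu$ is a closed invariant set for $\FF$, and by Theorem~\ref{b7}, $\overline{P}$ is the \emph{smallest} invariant set, so $\overline{P}\subseteq\text{supp}~\mu$.

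For the reverse inclusion $\text{supp}~\mu\subseteq\overline{P}$, I will fix $a\in\text{supp}~\mu$ and $\varepsilon>0$, and produce a point of $P$ within distance $\varepsilon$ of $a$. The starting point is the iterated invariance identity
\[
\mu \;=\; \sum_{|w|=n}\rho_w\,(F_w)_{\sharp}\mu, \qquad \rho_w:=\rho_{i_1}\cdots\rho_{i_n},
\]
obtained by induction from the single-step invariance. Beginning with $y_0:=a$, I construct inductively indices $i_k$, radii $\delta_k>0$, and points $y_k\in\text{supp}~\mu$ such that $F_{i_k}(y_k)\in B(y_{k-1},\delta_k)$. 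This is always possible: since $y_{k-1}\in\text{supp}~\mu$ we have $\mu(B(y_{k-1},\delta_k))>0$, and the single-step invariance forces some $i_k$ with $\mu\bigl(F_{i_k}^{-1}(B(y_{k-1},\delta_k))\bigr)>0$, so the preimage meets $\text{supp}~\mu$ and $y_k$ can be chosen there.

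Set $z_k:=F_{i_1\cdots i_k}(y_k)$. The contraction estimate
\[
d(z_k,z_{k-1}) \;=\; d\bigl(F_{i_1\cdots i_{k-1}}(F_{i_k}(y_k)),F_{i_1\cdots i_{k-1}}(y_{k-1})\bigr) \;\le\; r_{i_1}\cdots r_{i_{k-1}}\,\delta_k
\]
lets me choose the $\delta_k$ so that $\sum_k d(z_k,z_{k-1})<\varepsilon$; hence $(z_k)$ is Cauchy and converges to a limit $z^{*}\in\overline{B(a,\varepsilon)}$. Now I compare $z_k$ with the fixed point $x_{i_1\cdots i_k}\in P$ of the same composition:
\[
d(z_k,x_{i_1\cdots i_k}) \;=\; d\bigl(F_{i_1\cdots i_k}(y_k),F_{i_1\cdots i_k}(x_{i_1\cdots i_k})\bigr) \;\le\; r_{i_1}\cdots r_{i_k}\,d(y_k,x_{i_1\cdots i_k}).
\]
If this right-hand side tends to $0$, then $x_{i_1\cdots i_k}\to z^{*}$, so $z^{*}\in\overline{P}$; combined with $d(a,z^{*})\le\varepsilon$ and the fact that $\overline{P}$ is closed, letting $\varepsilon\to 0$ yields $a\in\overline{P}$.

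The main obstacle is precisely to show $r_{i_1}\cdots r_{i_k}\,d(y_k,x_{i_1\cdots i_k})\to 0$. In the setting of bounded $\text{supp}~\mu$ (the case relevant to the Bandt/Kravchenko framework) and $r<1$, this is painless: the distance factor is bounded by $\diam(\text{supp}~\mu)$ and $r_{i_1}\cdots r_{i_k}\le r^k\to 0$. In the critical case $r=1$ stressed by the authors, the inductive construction must be refined, e.g.\ by additionally requiring $y_k\in B(a,R_k)$ (which preserves positivity of measure for $R_k$ large enough) and invoking the Banach estimate $d(y_k,x_{i_1\cdots i_k})\le d(y_k,z_k)/(1-r_{i_1}\cdots r_{i_k})$ to translate the control over $d(y_k,z_k)$ into the required bound. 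This borderline bookkeeping is where I expect the real work of the proof to lie.
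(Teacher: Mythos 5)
Your first inclusion is exactly the paper's: Theorem~\ref{b5} together with Theorem~\ref{b7} gives $\overline{P}\subset\text{supp}~\mu$. The gap is in the reverse inclusion, and it is the one you yourself flag at the end. In your construction the indices are forced on you: $i_k$ is whatever index makes $\mu(F_{i_k}^{-1}(B(y_{k-1},\delta_k)))>0$, and $y_k$ is wherever that preimage happens to meet $\text{supp}~\mu$. Consequently you can guarantee neither $c_k:=r_{i_1}\cdots r_{i_k}\to 0$ (when $\sup_i r_i=1$ the forced factors $r_{i_k}$ may tend to $1$ so fast that the product stays bounded away from $0$), nor any useful bound on $d(y_k,z_k)$ (the support may be unbounded, and the radius $R_k$ you must allow in order to keep positive measure can grow without control). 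The Banach estimate only yields $d(z_k,x_{i_1\cdots i_k})\le \frac{c_k}{1-c_k}\,d(y_k,z_k)$, where $\frac{c_k}{1-c_k}$ is merely bounded (by $r_{i_1}/(1-r_{i_1})$), not small, while $d(y_k,z_k)$ is not small either; so nothing in the scheme makes the fixed points $x_{i_1\cdots i_k}$ accumulate at $z^*$. Since the theorem is asserted for an arbitrary invariant measure, with no hypothesis $r<1$ and no boundedness of the support, the case you defer as ``borderline bookkeeping'' is precisely the content of the statement, and the backward-orbit construction does not close it.

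The paper avoids the difficulty by a measure-theoretic argument on enlargements rather than by approximating points of the support by fixed points. For $x\notin\overline{P}$ put $\varepsilon=d(x,\overline{P})/2$, $G=B(x,\varepsilon)$, and consider $\overline{P}_\varepsilon$. By Lemma~\ref{T4}, $F_{i_1\dots i_k}(\overline{P}_\varepsilon)\subset\overline{P}_\varepsilon$ for every finite word, so each term in the iterated invariance identity satisfies $\mu(F_{i_1\dots i_k}^{-1}(\overline{P}_\varepsilon))\ge\mu(\overline{P}_\varepsilon)$; since these values average back to $\mu(\overline{P}_\varepsilon)$ with weights $\rho_{i_1}\cdots\rho_{i_k}$, a strict inequality for even one word is impossible, hence equality holds for all words. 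Then, fixing any single $i$ and choosing $k$ with $r_i^k<1/2$ and $d(F_i^k(x),x_i)<\varepsilon/2$, one gets $F_i^k(G)\subset\overline{P}_\varepsilon$, so $G\subset (F_i^k)^{-1}(\overline{P}_\varepsilon)\setminus\overline{P}_\varepsilon$, a $\mu$-null set, and $x\notin\text{supp}~\mu$. This ``no strict inequality inside a convex combination'' mechanism is the missing idea; importing it is what would repair your argument.
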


\begin{proof}~

From Theorems  \ref{b5} and \ref{b7} we have that
$\overline{P}\subset \text{supp}~\mu$.

In order to prove the other inclusion, let us consider $x\not\in
\overline{P}$. We will prove that $x\not\in \text{supp}~\mu$, by
showing that there exists a neighbourhood of $x$ of zero
$\mu$-measure.

Since $x \not\in \overline{P}$, let $\varepsilon=d(x,\overline{P})/2>0$. Let
$$G=B(x,\varepsilon)=\{y\in X: d(x,y)<\varepsilon\}.$$
 We will
prove that $\mu(G)=0$.

Define, as before, the set $\overline{P}_\varepsilon$ as $\overline{P}_\varepsilon =\{y\in
X: d(y,\overline{P})<\varepsilon\}$. Notice that $G\cap
\overline{P}_{\varepsilon}=\emptyset$.

Now, let $i\in \mathbb{N}$ be arbitrary but fixed throughout the proof.
Since $F_i$ is a contraction map whose fixed point is $x_i$ and whose contraction factor is $r_i$, $\{F_i^k(x)\}_{
k\in\N}$ converges to $x_i$ and $\{r_i^k\}_{k\in\N}$ converges to 0.

Then there exists $k=k(i)$ such that
$r_i^k<1/2$ and $d(F_i^k(x),x_i)<\varepsilon/2$.

Further, again by the contractivity of $F_i$, we have that for any
$j\in \N$, $F_i^j(G)\subset
 B(F_i^j(x),r_i^j\varepsilon)$, in particular $F_i^k(G)\subset
 B(F_i^k(x),r_i^k\varepsilon)$.

Moreover, if $z\in
B(F_i^k(x),r_i^k\varepsilon)$ then
\begin{equation*}d(z,\overline{P})\leq d(z,x_i)\leq
d(z,F_i^k(x))+d(F_i^k(x),x_i)<r_i^k\varepsilon+\varepsilon/2<\varepsilon.
\end{equation*}
Therefore $F_i^k(G)\subset
\overline{P}_\varepsilon$.
Hence, $G\subset[(F_i^k)^{-1}(\overline{P}_{\varepsilon})\backslash
\overline{P}_{\varepsilon}]$.

To finish our claim, it will be enough  to prove that
$\mu\left((F_i^k)^{-1}(\overline{P}_{\varepsilon})\right)=\mu
(\overline{P}_{\varepsilon})$.

Since $\overline{P}$ is an invariant set, we have
$F_n(\overline{P}_{\varepsilon})\subset\overline{P}_{\varepsilon}$
for all $n\in \mathbb{N}$.
Consequently, for every $(i_1\dots i_k)\in \mathbb{N}^k$,
$F_{i_1\dots i_k}(\overline{P}_{\varepsilon})\subset
\overline{P}_{\varepsilon}$ and therefore
\begin{equation}\label{support}
\mu(\overline{P}_{\varepsilon})\leq \mu(F_{i_1\dots
i_k}^{-1}(\overline{P}_{\varepsilon})) \quad \text{ for all } \quad (i_1\dots i_k)\in
\mathbb{N}^k.
\end{equation}
Note that if for some $(i_1\dots i_k)\in \mathbb{N}^k$ we had a strict inequality in the last equation, by the invariance of $\mu$ we would have that
$$\mu(\overline{P}_{\varepsilon})=\sum_{i_1\dots i_k}\rho_{i_1}\dots\rho_{i_k}\mu(F_{i_1\dots i_k}^{-1}(\overline{P}_{\varepsilon}))\gneq\mu(\overline{P}_{\varepsilon}).$$
Therefore from equation~\eqref{support} we must have
$\mu(\overline{P}_{\varepsilon})= \mu(F_{i_1\dots
i_k}^{-1}(\overline{P}_{\varepsilon}))$ for all choices $(i_1\dots i_k)\in
\mathbb{N}^k$.

In particular, by taking $i_1=\dots=i_k=i$, we obtain
$\mu(\overline{P}_{\varepsilon})=
\mu((F_i^k)^{-1}(\overline{P}_{\varepsilon}))$, and the proof is
complete.
\end{proof}

As noted before, the existence of the invariant measure depends only
on the relatively weak condition $\sum_i\rho_id(x_1,x_i) < \infty$,
independently of the value of $r$. (For example if the set of fixed
points of $F_i$ is bounded, the condition is already satisfied and
guaranties the existence {\em and} uniqueness of an invariant
measure).

Our result shows, that in contrast to the case of {\em
invariant sets} which may not be unique, if an invariant measure
having bounded support exists, it is unique. Indeed, if $\mu$ is an invariant measure whose support is bounded,
from our result it follows that the set of fixed points of members of $\FF$ is bounded, which implies that $\sum_i\rho_id(x_1,x_i) < \infty$ and, consequently we obtain the uniqueness of the invariant measure.

\end{document}